\theoremstyle{plain}
\newtheorem{theorem}{Theorem}[section]
\newtheorem{proposition}[theorem]{Proposition}
\newtheorem{corollary}[theorem]{Corollary}
\theoremstyle{definition}
\newtheorem{definition}[theorem]{Definition}
\newtheorem{remark}[theorem]{Remark}
\def\bdf{\begin{defn}}
\def\edf{\end{defn}}
\begin{document}

\title{A remark on the Kottwitz homomorphism}

\author{Moshe Adrian}
\address{Department of Mathematics 
Queens College, CUNY
65-30 Kissena Blvd., Queens, NY 11367-15971}
\email{moshe.adrian@qc.cuny.edu} 

\maketitle

\begin{abstract}
We prove that for any split, almost simple, connected reductive group $G$ over a $p$-adic field $F$, the Kottwitz homomorphism $\kappa : G(F) \rightarrow \Omega$ exhibits a homomorphic section $\Omega \hookrightarrow G(F)$.  We then extend this result to certain additional split connected reductive groups.
\end{abstract}

\section{Introduction}

Let $G$ be a connected reductive group over a $p$-adic field $F$.  In \cite{Kot97}, Kottwitz defined a canonical homomorphism 
\[
\kappa : G(F) \twoheadrightarrow X^*(Z(\widehat{G})^I)^{\mathrm{Fr}}.
\]
This homomorphism is surjective and, in the case that $G$ is split, simplifies to a homomorphism
\[
\kappa : G(F) \twoheadrightarrow X^*(Z(\widehat{G})) \cong X_*(T) / Q^{\vee}.
\]

In this note, we show that the map $\kappa$ has a homomorphic section in the case that $G$ is split and almost simple, as well as for certain additional split groups.  More specifically, fix a fundamental alcove in the building of $G$ corresponding to a maximal split torus $T$, and let $\Omega$ be the subgroup of the extended affine Weyl group $W$ that stabilizes $C$.  We show that there is a homomorphic section of the canonical projection $N_G(T) \twoheadrightarrow \Omega$, where $N_G(T)$ is the normalizer of a maximal torus $T$ in $G$.  If $G$ is almost-simple, then this section can be described as follows: it is known (see Proposition \ref{bijection}) that $\Omega$ may be identified with a collection of elements $\{1, \epsilon_i \rtimes w_i \} \subset W = X_*(T) \rtimes W_{\circ}$, where $\epsilon_i$ are certain fundamental coweights and $W_{\circ}$ is the finite Weyl group.  By \cite[\S9.3.3]{Spr98}, there is a canonical map $\mathcal{N}_{\circ} : W_{\circ} \rightarrow N_G(T)$ (denoted $\phi$ in $loc. \ cit.$) that is compatible with the projection $N_G(T) \rightarrow W_{\circ}$.  We may then consider the map
\[
\iota : \Omega \rightarrow N_G(T)
\]
\[
\ \ \ \ \ \ \ \ \ \ \  \epsilon_i w_i \mapsto \epsilon_i(\varpi^{-1}) \mathcal{N}_{\circ}(w_i),
\]
where $\varpi$ is a uniformizer in $F$.
The map $\iota$ is a section of the projection $N_G(T) \twoheadrightarrow \Omega$, and it turns out that $\iota$ is a homomorphism in all cases except the adjoint group of type $D_l$ where $l$ is odd, and some cases in type $A_l$ (see Theorem \ref{mainresult} and Remark \ref{nonhomomorphic}).  Nonetheless, we can still use $\iota$ to construct a homomorphic section for all almost-simple $p$-adic groups (see Theorem \ref{mainresult}).  We then show that for certain split connected groups with connected center, the Kottwitz homomorphism exhibits a homomorphic section (see Proposition \ref{connectedcenter}).

We would like to remark that if $G$ is any split connected reductive group with simply connected derived group, then $\kappa$ has a homomorphic section.  This follows from the fact that $\Omega$ is a free abelian group of finite rank, isomorphic to a free quotient of $X_*(T)$.  Then one constructs a section by taking a homomorphic section of $X_*(T) \rightarrow \Omega$ and then composing that section with the map $X_*(T) \rightarrow T$, $\lambda \mapsto \lambda(\varpi^{-1})$.  In particular, the image of this section lies in $T$, not just $N_G(T)$.  The situation where $\Omega$ is \emph{finite} is much more subtle, which is what this paper is about.

\subsection{Acknowledgements}
This paper was written in response to a question that Karol Koziol asked me; I wish to thank him for asking the question.  I thank the referees for their comments, especially one of the referees for very valuable comments and suggestions; in particular, ideas on how to expand the results beyond split almost-simple groups, and for the proof of Proposition \ref{connectedcenter}.  I thank Karol Koziol and Sean Rostami for helpful conversations.  Support for this project was provided by a grant from the Simons
Foundation \#422638 and by a PSC-CUNY award, jointly funded by the
Professional Staff Congress and The City University of New York.

\section{Preliminaries}
Let $G$ be a split connected reductive group over a $p$-adic field $F$.  Fix a pinning $(B, T, \{ X_{\alpha} \})$ for $G$.  This gives rise to a set of non-zero roots $\Phi$ of $G$ with respect to $T$,  a set of positive roots $\Pi$ in $\Phi$, and a basis $\Delta = \{\alpha_1, \alpha_2, ..., \alpha_l \}$ of the set of positive roots, so that $l$ is the rank of $G$.  We recall that for each $\alpha \in \Phi$, there exists an isomorphism $u_{\alpha}$ of $F$ onto a unique closed subgroup $U_{\alpha}$ of $G$ such that $t u_{\alpha}(x) t^{-1} = u_{\alpha}(\alpha(t) x)$, for $t \in T, x \in F$ \cite[\S8.1.1]{Spr98}.

Let $X^*(T)$, $X_*(T)$ be the character, cocharacter lattices of $T$, respectively.  Let $Q$ be the lattice generated by $\Phi$, and $P^{\vee}$ the coweight lattice.   Namely, $P^{\vee}$ is the $\mathbb{Z}$-dual of $Q$ relative to the standard pairing 
$( \cdot, \cdot) : X^*(T) \times X_*(T) \rightarrow \mathbb{Z}$.   We let $\Phi^{\vee}$ be the system of coroots, $Q^{\vee}$ the lattice generated by $\Phi^{\vee}$, and $P$ the weight lattice.  Then $P^{\vee}$ is spanned by the $l$ fundamental coweights, which are denoted $\epsilon_1, \epsilon_2, ..., \epsilon_l$.  We recall that the $\epsilon_i$ are defined by the relation $(\epsilon_i, \alpha_j) = \delta_{ij}$.  If $\alpha$ is a root, we denote its associated coroot by $\alpha^{\vee}$.

We now let $W_{\circ} = N_G(T) / T$ be the Weyl group of $G$ relative to $T$.  For each $\alpha \in \Phi$, we let $s_{\alpha} \in W_{\circ}$ be the simple reflection associated to $\alpha$.  Then the $u_{\alpha}$ may be chosen such that for all $\alpha \in R$, $n_{\alpha} = u_{\alpha}(1) u_{-\alpha}(-1) u_{\alpha}(1)$ lies in $N_G(T)$ and has image $s_{\alpha}$ in $W_{\circ}$ (see \cite[\S8.1.4]{Spr98}).  Relative to the pinning that we have chosen, there is a canonical, well-defined map $\mathcal{N}_{\circ} : W_{\circ} \rightarrow N_G(T)$ \cite[\S9.3.3]{Spr98} (the map is denoted $\phi$ in loc.cit.), defined by $\mathcal{N}_{\circ}(w) = n_{\beta_1} n_{\beta_2} \cdots n_{\beta_m}$ for a reduced expression $w = s_{\beta_1} s_{\beta_2} \cdots s_{\beta_m}$.

\subsection{The map $\mathcal{N}_{\circ}$}\label{rostami}

In this section, we recall a result about the map $\mathcal{N}_{\circ}$ from \cite{Ros16}.

\begin{definition}
For $u,v \in W_{\circ}$, we define 
\[
\mathcal{F}(u,v) = \{\alpha \in \Pi \ | \ v(\alpha) \in -\Pi, u(v(\alpha)) \in \Pi \}.
\]
\end{definition}
The following proposition describes the failure of $\mathcal{N}_{\circ}$ to be a homomorphism.
\begin{proposition}\cite[Proposition 3.1.2]{Ros16}\label{ahomomorphicity}
For $u,v \in W_{\circ}$,
\[
\mathcal{N}_{\circ}(u) \cdot \mathcal{N}_{\circ}(v) = \mathcal{N}_{\circ}(u \cdot v) \cdot \displaystyle\prod_{\alpha \in \mathcal{F}(u,v)} \alpha^{\vee}(-1).
\]
\end{proposition}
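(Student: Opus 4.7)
The plan is to prove the identity by induction on $\ell(v)$. The two algebraic ingredients, both rank-one computations in the $\mathrm{SL}_2$-subgroup attached to a simple root $\alpha$, are the standard relations $n_\alpha^2 = \alpha^\vee(-1)$ and $t \cdot n_\alpha = n_\alpha \cdot s_\alpha(t)$ for $t \in T$. The key combinatorial input is that $\mathcal{F}(u,v)$ is in bijection (via $\alpha \mapsto -v(\alpha)$) with $N(u) \cap N(v^{-1})$, where $N(w) := \{\beta \in \Pi : w(\beta) \in -\Pi\}$. Consequently $|\mathcal{F}(u,v)| = \tfrac{1}{2}(\ell(u) + \ell(v) - \ell(uv))$, so $\mathcal{F}(u,v)$ exactly measures how far the concatenation of reduced expressions for $u$ and $v$ is from being reduced.

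\textbf{Base case.} For $v = s_\alpha$ a simple reflection, there are two subcases. If $\ell(us_\alpha) = \ell(u)+1$, then a reduced expression for $u$ followed by $s_\alpha$ is reduced, so $\mathcal{N}_\circ(u) n_\alpha = \mathcal{N}_\circ(us_\alpha)$; a direct check shows $\mathcal{F}(u, s_\alpha) = \emptyset$. If $\ell(us_\alpha) = \ell(u)-1$, then appending $s_\alpha$ to a reduced expression for $us_\alpha$ gives a reduced expression for $u$, so $\mathcal{N}_\circ(u) = \mathcal{N}_\circ(us_\alpha) n_\alpha$; multiplying on the right by $n_\alpha$ gives $\mathcal{N}_\circ(u) n_\alpha = \mathcal{N}_\circ(us_\alpha) n_\alpha^2 = \mathcal{N}_\circ(us_\alpha) \alpha^\vee(-1)$, which matches $\mathcal{F}(u, s_\alpha) = \{\alpha\}$.

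\textbf{Inductive step.} Write $v = v' s_\alpha$ with $\ell(v') = \ell(v)-1$, so that $v'(\alpha) \in \Pi$ and $\mathcal{N}_\circ(v) = \mathcal{N}_\circ(v') n_\alpha$. Applying the inductive hypothesis to $(u, v')$ and then commuting the torus factors past $n_\alpha$ using $t \cdot n_\alpha = n_\alpha \cdot s_\alpha(t)$,
\[
\mathcal{N}_\circ(u) \mathcal{N}_\circ(v) = \mathcal{N}_\circ(uv') n_\alpha \cdot \prod_{\beta \in \mathcal{F}(u, v')} (s_\alpha \beta)^\vee(-1).
\]
Because $v'(\alpha) \in \Pi$ forces $\alpha \notin \mathcal{F}(u, v')$, each $s_\alpha(\beta)$ remains a positive root. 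Applying the base case to the pair $(uv', s_\alpha)$ converts $\mathcal{N}_\circ(uv') n_\alpha$ into $\mathcal{N}_\circ(uv' s_\alpha)$, picking up an extra $\alpha^\vee(-1)$ precisely when $uv'(\alpha) \in -\Pi$. The result then reduces to the combinatorial identity
\[
\mathcal{F}(u, v' s_\alpha) = s_\alpha\bigl(\mathcal{F}(u, v')\bigr) \sqcup X, \qquad X = \{\alpha\} \text{ if } uv'(\alpha) \in -\Pi, \text{ else } X = \emptyset.
\]
The main obstacle is this last set-theoretic identity. Both inclusions unwind from the definition of $\mathcal{F}$ together with the fact that $s_\alpha$ permutes $\Pi \setminus \{\alpha\}$, but the delicate point is verifying that $\alpha$ itself belongs to $\mathcal{F}(u, v' s_\alpha)$ exactly in the length-decreasing case, so that the stray $\alpha^\vee(-1)$ produced by $n_\alpha^2$ in the base case is exactly accounted for on the right-hand side of the claimed formula.
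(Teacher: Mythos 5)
The paper does not prove this proposition; it cites it directly to \cite[Proposition~3.1.2]{Ros16}. So there is no in-paper argument to compare against. Evaluated on its own terms, your proof is correct.

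A few confirmations of the points you flagged as delicate. In the base case with $\ell(us_\alpha)=\ell(u)+1$, the only $\beta\in\Pi$ sent to $-\Pi$ by $s_\alpha$ is $\alpha$ itself, and then $u(s_\alpha\alpha)=-u(\alpha)\in-\Pi$, so indeed $\mathcal{F}(u,s_\alpha)=\emptyset$; in the length-decreasing case $u(\alpha)\in-\Pi$ gives $\mathcal{F}(u,s_\alpha)=\{\alpha\}$, matching the stray $\alpha^\vee(-1)=n_\alpha^2$. In the inductive step, $\ell(v's_\alpha)=\ell(v')+1$ forces $v'(\alpha)\in\Pi$, hence $\alpha\notin\mathcal{F}(u,v')$, so $s_\alpha$ maps $\mathcal{F}(u,v')$ into $\Pi\setminus\{\alpha\}$ and the torus factors stay of the form $\gamma^\vee(-1)$ with $\gamma\in\Pi$. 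The set identity $\mathcal{F}(u,v's_\alpha)=s_\alpha(\mathcal{F}(u,v'))\sqcup X$ checks out: for $\gamma\neq\alpha$, writing $\beta=s_\alpha\gamma$ turns the two defining conditions for $\gamma\in\mathcal{F}(u,v's_\alpha)$ into exactly those for $\beta\in\mathcal{F}(u,v')$, while $\gamma=\alpha$ lies in $\mathcal{F}(u,v's_\alpha)$ if and only if $uv'(\alpha)\in-\Pi$, i.e.\ exactly when the base case contributes $\alpha^\vee(-1)$. One small remark: the bijection $\alpha\mapsto -v(\alpha)$ with $N(u)\cap N(v^{-1})$ and the resulting length count are accurate, but they are not actually used in the induction — they serve only as motivation and could be omitted.

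Since the paper defers to Rostami here, I cannot line your argument up against a proof in this document, but the induction on $\ell(v)$, with the two $\mathrm{SL}_2$ relations $n_\alpha^2=\alpha^\vee(-1)$ and $t\,n_\alpha=n_\alpha\,s_\alpha(t)$ driving the bookkeeping, is the standard and essentially canonical route to this identity.
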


\begin{definition}
For $w \in W_{\circ}, i \in \mathbb{N}$, we define
\[
\mathcal{F}_w(i) = \{ \alpha \in \Pi \ | \ w^i(\alpha) \in -\Pi, w^{i+1}(\alpha) \in \Pi  \}.
\]
\end{definition}

\begin{corollary}\label{powersformula}
If $w \in W_{\circ}$ and $n \in \mathbb{N}$, then
$$\mathcal{N}_{\circ}(w)^n = \mathcal{N}_{\circ}(w^n) \cdot \displaystyle\prod_{m = 1}^{n-1} \displaystyle\prod_{\alpha \in \mathcal{F}_w(m)} \alpha^{\vee}(-1).$$
\end{corollary}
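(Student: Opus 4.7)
The plan is to argue by induction on $n \geq 1$. The base case $n=1$ is immediate: the double product on the right-hand side is empty, and we have the tautology $\mathcal{N}_{\circ}(w)^1 = \mathcal{N}_{\circ}(w^1)$.

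For the inductive step, assuming the formula for $n$, I would write
\[
\mathcal{N}_{\circ}(w)^{n+1} = \mathcal{N}_{\circ}(w) \cdot \mathcal{N}_{\circ}(w)^n,
\]
substitute the inductive hypothesis into the second factor, and then apply Proposition \ref{ahomomorphicity} with $u = w$ and $v = w^n$ to rewrite the leading product as
\[
\mathcal{N}_{\circ}(w) \cdot \mathcal{N}_{\circ}(w^n) = \mathcal{N}_{\circ}(w^{n+1}) \cdot \prod_{\alpha \in \mathcal{F}(w, w^n)} \alpha^{\vee}(-1).
\]

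The one combinatorial observation required is $\mathcal{F}(w, w^n) = \mathcal{F}_w(n)$: unwinding the definitions, both sides consist precisely of those $\alpha \in \Pi$ satisfying $w^n(\alpha) \in -\Pi$ and $w^{n+1}(\alpha) \in \Pi$. This identifies the extra factor produced by the proposition with exactly the $m = n$ term of the target product. To assemble the final answer, I would use that every $\alpha^{\vee}(-1)$ lies in the abelian torus $T$, so all such elements commute pairwise; this permits me to freely reposition the new $m=n$ factor and absorb it into the existing product $\prod_{m=1}^{n-1}$, yielding $\prod_{m=1}^{n}$ and completing the induction.

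I do not anticipate any genuine obstacle here: the corollary is a purely formal iteration of Proposition \ref{ahomomorphicity}, and the only point deserving any care is the set-theoretic identification $\mathcal{F}(w, w^n) = \mathcal{F}_w(n)$, which is essentially a matter of matching definitions.
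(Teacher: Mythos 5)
Your proof is correct and follows essentially the same route as the paper's: iterate Proposition \ref{ahomomorphicity} by multiplying on the left by $\mathcal{N}_{\circ}(w)$ at each step, so that $u = w$, $v = w^n$, and the new factor is governed by $\mathcal{F}(w, w^n) = \mathcal{F}_w(n)$. You make explicit a point the paper leaves tacit, namely that the factors $\alpha^{\vee}(-1)$ lie in the abelian torus $T$ and may be reordered, which is needed to assemble the double product in the stated form.
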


\begin{proof}
By Proposition \ref{ahomomorphicity}, $\mathcal{N}_{\circ}(w)^2 = \mathcal{N}_{\circ}(w^2) \cdot \displaystyle\prod_{\alpha \in \mathcal{F}_w(1)} \alpha^{\vee}(-1).$  Multiplying by $\mathcal{N}_{\circ}(w)$ on the left and using Proposition \ref{ahomomorphicity} again, we get $\mathcal{N}_{\circ}(w)^3 = \mathcal{N}_{\circ}(w^3) \cdot \displaystyle\prod_{\alpha \in \mathcal{F}_w(2)} \alpha^{\vee}(-1)  \cdot \displaystyle\prod_{\alpha \in \mathcal{F}_w(1)} \alpha^{\vee}(-1).$ Continuing in this way, the claim follows.
\end{proof}

\section{Embedding $\Omega$ into $G$}

Let $G$ be a split, almost-simple $p$-adic group.  We set $W = N_G(T) / T_{\circ}$, where $T_{\circ}$ is the maximal bounded subgroup of $T$.  The group $W$ is the extended affine Weyl group, and we note that we have a semidirect product decomposition $W = X_*(T) \rtimes W_{\circ}$.  
We also set $\Omega = W / W^{\circ}$, where $W^{\circ} = Q^{\vee} \rtimes W_{\circ}$ is the affine Weyl group.  We therefore have a canonical projection $N_G(T) \rightarrow \Omega$.  This projection is exactly the restriction of $\kappa$ to $N_G(T)$.

The group $\Omega$ can be identified with the subgroup of $W$ that stabilizes a fundamental alcove $\mathcal{C}$.  Moreover, it is known that $\Omega$ acts on the set $\{1 - \alpha_0, \alpha_1, \alpha_2, ..., \alpha_l \}$, where $\alpha_0$ is the highest root in $\Phi$.  The action of $\Omega$ on this set can be found in \cite[p. 18-19]{IM65}.  We let $\Omega_{\mathrm{ad}}$ be the analogous group for the adjoint group $G_{\mathrm{ad}}$.

It is known that there exists in $W_{\circ}$ an element $w_{\Delta}$ such that $w_{\Delta}(\Delta) = -\Delta$.  The element $w_{\Delta}$ is unique and satisfies $w_{\Delta}^2 = 1$.  Moreover, if we denote the subset $\Delta - \{\alpha_i \}$ by $\Delta_i$, then the subgroup $W_i$ of $W_{\circ}$ generated by $s_{\alpha_1}, ..., \hat{s}_{\alpha_i}, ..., s_{\alpha_l}$ ($\hat{s}_{\alpha_i}$ means that $s_{\alpha_i}$ is omitted) contains an element $w_{\Delta_i}$ such that $w_{\Delta_i}(\Delta_i) = -\Delta_i$ and $w_{\Delta_i}^2 =1$.  

We recall the following result from \cite{IM65}.

\begin{proposition}\cite[Proposition 1.18]{IM65}\label{bijection}
The mapping from the set $\{0\} \cup \{\epsilon_i : (\alpha_0, \epsilon_i) = 1 \}$ onto $\Omega_{\mathrm{ad}}$ defined by $0 \mapsto 1, \epsilon_i \mapsto \epsilon_i w_{\Delta_i} w_{\Delta}$ is bijective.
\end{proposition}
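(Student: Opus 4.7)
The plan is to route the proof through the isomorphism $\Omega_{\mathrm{ad}} \cong P^\vee/Q^\vee$. Since $W_{\mathrm{ad}} = P^\vee \rtimes W_\circ$, the projection $(\lambda, w) \mapsto [\lambda]$ is a homomorphism onto $P^\vee/Q^\vee$ with kernel the affine Weyl group $W^\circ = Q^\vee \rtimes W_\circ$. Because $W^\circ$ acts simply transitively on the alcoves, one has $W_{\mathrm{ad}} = W^\circ \rtimes \Omega_{\mathrm{ad}}$, and the restricted projection gives a canonical isomorphism $\Omega_{\mathrm{ad}} \xrightarrow{\sim} P^\vee/Q^\vee$ sending each element of $\Omega_{\mathrm{ad}}$ to the class of its translation part modulo $Q^\vee$.

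Next, I would recall the classical fact that the minuscule fundamental coweights $\epsilon_i$ (those satisfying $(\alpha_0, \epsilon_i) = 1$, equivalently $n_i = 1$ in $\alpha_0 = \sum_j n_j \alpha_j$), together with $0$, exhaust the cosets of $P^\vee/Q^\vee$. This is checked case-by-case from the Bourbaki tables, or deduced from the fact that the special vertices of $\mathcal{C}$ are exactly the $P^\vee$-translates of $\{0\} \cup \{\epsilon_i : n_i = 1\}$. Combined with the first step, this immediately gives injectivity of the proposed map (distinct $\epsilon_i$ have distinct images in $P^\vee/Q^\vee$) and reduces the proposition to the claim that $\omega_i := \epsilon_i w_{\Delta_i} w_\Delta$ genuinely lies in $\Omega_{\mathrm{ad}}$.

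The main technical step is therefore to verify that each $\omega_i$ stabilizes the alcove $\mathcal{C}$. I would check this by examining the action of $\omega_i$ on the set of affine simple roots $\{1-\alpha_0, \alpha_1, \ldots, \alpha_l\}$, using the formula $(\lambda, w)\cdot(\beta + k) = w(\beta) + k - (w(\beta), \lambda)$. The target picture is that $\omega_i$ realizes the unique automorphism of the affine Dynkin diagram that swaps the $0$-node (corresponding to $1-\alpha_0$) with the $i$-th node (corresponding to $\alpha_i$) and permutes the remaining nodes accordingly. The two key ingredients are the identities $w_\Delta(\Delta) = -\Delta$ and $w_{\Delta_i}(\Delta_i) = -\Delta_i$, which together track how $w_{\Delta_i} w_\Delta$ permutes the finite roots, and the minuscule identity $(\alpha_0, \epsilon_i) = 1$ together with $(\alpha_j, \epsilon_i) = \delta_{ij}$, which produces the constant shift turning any affine root of the form $-\alpha_0 + 0$ into $1-\alpha_0$ and correspondingly $1-\alpha_0$ into $\alpha_i$.

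The principal obstacle is exactly this alcove-stabilization check: one must carefully track the combined action of $w_\Delta$, $w_{\Delta_i}$, and translation by $\epsilon_i$ on $\alpha_0$ and on the individual simple roots, and nothing a priori forces $w_{\Delta_i} w_\Delta$ to preserve $\Delta$ (as one sees already in type $A_2$, where $w_{\Delta_1} w_\Delta$ sends $\alpha_2$ to the lowest root $-\alpha_0$). The minuscule condition is precisely what causes the affine shift from $\epsilon_i$ to compensate for this failure, yielding an honest permutation of the affine simple roots and hence an element of $\Omega_{\mathrm{ad}}$; bijectivity then follows by combining the three steps.
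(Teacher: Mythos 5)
The paper does not prove this statement; it is recalled verbatim from Iwahori--Matsumoto \cite[Proposition~1.18]{IM65} and used as input, so there is no in-paper argument to compare against. Your outline follows what is essentially the classical route (as in \cite{IM65} and Bourbaki): identify $\Omega_{\mathrm{ad}}$ with $P^\vee/Q^\vee$ via the translation-part projection to dispose of injectivity and counting, and then verify directly that each $\epsilon_i w_{\Delta_i} w_{\Delta}$ stabilizes the fundamental alcove. The formula you record for the action of $(\lambda,w)$ on an affine root $\beta+k$ is correct under the usual conventions, and your type-$A_2$ check is accurate: $w_{\Delta_1} w_{\Delta}$ sends $\alpha_2$ to $-\alpha_0$, and the shift by $\epsilon_1$ then yields $1-\alpha_0$.

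Two comments on the execution. First, the alcove-stabilization step, which you correctly identify as the technical heart, is only sketched. The crux is the identity $w_{\Delta_i} w_{\Delta}(\alpha_i) = -\alpha_0$ together with the fact that $w_{\Delta_i} w_{\Delta}$ permutes $\{-\alpha_0\}\cup(\Delta\setminus\{\alpha_i\})$ appropriately; justifying this in general uses, beyond $w_{\Delta}(\Delta)=-\Delta$ and $w_{\Delta_i}(\Delta_i)=-\Delta_i$, that the longest element of the parabolic subgroup $W_i$ maps $\Phi^+\setminus\Phi_i^+$ to itself. Carrying this out carefully (or citing it) is what turns your sketch into a proof. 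Second, a small wording slip: in general $\omega_i$ does not \emph{swap} the $0$-node with the $i$-node of the affine diagram; it sends the $0$-node to the $i$-node, and, as your own $A_2$ example shows, $\omega_1$ there has order $3$ and acts as a $3$-cycle, not a transposition. Neither point changes the soundness of the plan, but as it stands the proposal defers rather than completes the key computation.
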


The notation $\rho_i$ (and sometimes $\rho$) is used in \cite{IM65} to denote the element $\epsilon_i w_{\Pi_i} w_{\Delta}$.  We will adopt the same notation.  We will also let $S_{\mathrm{ad}}$ denote the set $\{0\} \cup \{\epsilon_i : (\alpha_0, \epsilon_i) = 1 \}$.  We note that every lattice between $Q^{\vee}$ and $P^{\vee}$ arises as $\langle Q^{\vee}, S \rangle$, for some subset $S \subset S_{\mathrm{ad}}$.  If $S$ is such a subset, we will talk of the almost-simple $p$-adic group $G$ that is determined by the lattice $\langle Q^{\vee}, S \rangle$.  We note in particular that if $\Omega_G$ denotes the omega group for $G$, then one can see that $\Omega_G = W / W^{\circ} \cong \langle 1, \rho_i : \epsilon_i \in S \rangle$.

We now assume that $G$ is not simply connected.  For otherwise, $\Omega = 1$, so the claim that $\kappa : G \rightarrow \Omega$ has a homomorphic section is vacuous.

Let $\varpi$ be a uniformizer of $F$.  There is a natural map $X_*(T) \rightarrow N_G(T)$ given by $\lambda \mapsto \lambda(\varpi^{-1})$ (see \cite[p. 31]{Tit79}).  We also have the map $\mathcal{N}_{\circ} : W_{\circ} \rightarrow N_G(T)$.  Coupling these maps together, we obtain a natural map
\[
W \rightarrow N_G(T)
\]
\[
\ \ \ \ \ (\lambda, w) \mapsto \lambda(\varpi^{-1}) \mathcal{N}_{\circ}(w)
\]
for $\lambda \in X_*(T), w \in W_{\circ}$. Most of the time, we will write $\lambda w$ instead of $(\lambda, w)$.  Proposition \ref{bijection} gives us a set-theoretic embedding $\Omega \hookrightarrow W$.  We can then consider the composite map $\Omega \hookrightarrow W \rightarrow N_G(T)$, which gives us a section of the canonical projection $N_G(T) \rightarrow \Omega$:
$$\iota : \Omega \rightarrow N_G(T)$$ $$\omega = \epsilon_i w_{\Delta_i} w_{\Delta} \mapsto \epsilon_i(\varpi^{-1}) \mathcal{N}_{\circ}(w_{\Delta_i} w_{\Delta})$$ 

That $\epsilon_i(\varpi^{-1})$ is well-defined follows from the fact that $\epsilon_i \in X_*(T)$ by our definition of $G$ earlier.    That $\iota$ is a section follows from Proposition \ref{bijection}.  In particular, $\iota$ is injective. We will sometimes identify $\epsilon_i$ with $\epsilon_i(\varpi^{-1})$ for ease of notation.

We will show that $\iota$ is a homomorphic embedding for all types except $A_l$, and the specific case when $G$ is adjoint of type $D_l$ where $l$ is odd.  Nonetheless, we will still produce a homomorphic embedding $\Omega \hookrightarrow N_G(T)$ which is a section of $N_G(T) \rightarrow \Omega$, in these two outlier cases.

Suppose $\omega = \epsilon_i w_{\Delta_i} w_{\Delta}$ is a generator of $\Omega$, whose order is $r$.    Propositions \ref{firstprop} and \ref{secondprop} will be dedicated to showing that $\iota(\omega)$ also has order $r$.  Let $w_i = w_{\Delta_i} w_{\Delta}$ for convenience of notation.  We compute 
\[
\iota(\omega)^r = (\epsilon_i(\varpi^{-1}) \mathcal{N}_{\circ}(w_i))^r = \epsilon_i(\varpi^{-1}) \cdot \left( \mathcal{N}_{\circ}(w_i) \epsilon_i(\varpi^{-1}) \mathcal{N}_{\circ}(w_i)^{-1} \right) \cdot \left(\mathcal{N}_{\circ}(w_i)^2 \epsilon_i(\varpi^{-1}) \mathcal{N}_{\circ}(w_i)^{-2}\right) \]
\[
 \cdots\left(\mathcal{N}_{\circ}(w_i)^{r-1} \epsilon_i(\varpi^{-1}) \mathcal{N}_{\circ}(w_i)^{1-r} \right) \mathcal{N}_{\circ}(w_i)^r = (\epsilon_i + w_i(\epsilon_i )+ w_i^2(\epsilon_i) + \cdots + w_i^{r-1}(\epsilon_i))(\varpi^{-1}) \mathcal{N}_{\circ}(w_i)^r.
\]
We will now show that $\epsilon_i + w_i(\epsilon_i )+ w_i^2(\epsilon_i) + \cdots + w_i^{r-1}(\epsilon_i) = 0$ and $ \mathcal{N}_{\circ}(w_i)^r = 1$.
\begin{proposition}\label{firstprop}
$\epsilon_i + w_i(\epsilon_i )+ w_i^2(\epsilon_i) + \cdots + w_i^{r-1}(\epsilon_i) = 0$.
\end{proposition}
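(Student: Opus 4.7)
The plan is to exploit the identification of $\Omega$ with the subgroup of $W$ that stabilizes the fundamental alcove $\mathcal{C}$, which is recalled at the beginning of this section. Under this identification, the canonical projection $W \twoheadrightarrow \Omega = W/W^\circ$ admits a splitting that is a genuine group homomorphism, namely the inclusion of the stabilizer subgroup. Consequently, the order of $\omega = \epsilon_i w_i$ in the quotient $\Omega$ agrees with its order inside the ambient group $W$, and the hypothesis that this order equals $r$ means precisely that $\omega^r = 1$ in $W$.

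The next step is to expand $\omega^r$ directly using the semidirect product structure $W = X_*(T) \rtimes W_\circ$. Viewing $\omega$ as the pair $(\epsilon_i, w_i)$, a routine induction on $r$ using the multiplication rule $(\lambda_1, u_1)(\lambda_2, u_2) = (\lambda_1 + u_1(\lambda_2),\ u_1 u_2)$ gives
\[
\omega^r = \left( \epsilon_i + w_i(\epsilon_i) + w_i^2(\epsilon_i) + \cdots + w_i^{r-1}(\epsilon_i),\ w_i^r \right).
\]
Equating this with the identity $(0, 1) \in X_*(T) \rtimes W_\circ$ forces both coordinates to vanish, and the vanishing of the first coordinate is exactly the statement of Proposition \ref{firstprop}.

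As a side remark, this argument simultaneously produces the auxiliary identity $w_i^r = 1$ in $W_\circ$, which will feed into Proposition \ref{secondprop}. The truly delicate work is deferred to that next step: upgrading $w_i^r = 1$ to $\mathcal{N}_\circ(w_i)^r = 1$ in $N_G(T)$ requires controlling the obstruction to $\mathcal{N}_\circ$ being a homomorphism, via Corollary \ref{powersformula} and an analysis of the sets $\mathcal{F}_{w_i}(m)$. By contrast, Proposition \ref{firstprop} itself presents no real obstacle; it is essentially an unpacking of the semidirect product law once the alcove-stabilizer embedding $\Omega \hookrightarrow W$ is in hand.
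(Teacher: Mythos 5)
Your proof is correct and takes a genuinely different---and substantially cleaner---route than the paper. The paper proves Proposition~\ref{firstprop} by a type-by-type computation: for each of $B_l, C_l, D_l, E_6, E_7, A_l$ (with subcases depending on the isogeny type), it reads off the action of $w_i$ on the affine Dynkin diagram from the tables in \cite[pp.~18--19]{IM65} and explicitly computes each $w_i^j(\epsilon_i)$ as a linear combination of fundamental coweights, then verifies the sum vanishes by hand. You instead observe that the whole statement is a structural consequence of the identification of $\Omega$ with the alcove-stabilizer subgroup of $W$. Because $\Omega$ sits inside $W$ as a subgroup mapping isomorphically onto $W/W^{\circ}$, the element $\omega = \epsilon_i w_i$ has the same order $r$ in $W$ as it does in the quotient, so $\omega^r = 1$ in $W$; the semidirect product multiplication rule in $X_*(T)\rtimes W_{\circ}$ then gives
\[
\omega^r = \Bigl(\textstyle\sum_{j=0}^{r-1} w_i^j(\epsilon_i),\ w_i^r\Bigr) = (0,1),
\]
and reading off the first coordinate is exactly Proposition~\ref{firstprop}. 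As a bonus you also recover $w_i^r=1$ for free (a fact the paper cites separately from \cite[p.~18]{IM65}). Your argument is uniform across all types and all isogenies, avoids the tables entirely, and makes transparent \emph{why} the identity holds; the paper's computational approach, by contrast, yields the explicit values of $w_i^j(\epsilon_i)$, which could serve as a sanity check but are not reused later. Your plan correctly identifies that the real content of the section is deferred to Proposition~\ref{secondprop}, where the failure of $\mathcal{N}_{\circ}$ to be a homomorphism must actually be controlled.
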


\begin{proof}
We compute $w_i^j(\epsilon_i)$ for $j = 1, 2, ..., r-1$.  The tables on pages 18-19 of \cite{IM65} give the values of $i$ for each type, and the explicit action of $w_i$ on the set $\{ -\alpha_0, \alpha_1, \alpha_2, ..., \alpha_l \}$.    We also note that the order of $\omega$ equals the order of $w_i$ (see \cite[p. 18]{IM65}).  We begin with type $B_l$ and end with type $A_l$ (since, computationally, $A_l$ is the most intricate).

\begin{itemize}
\item  In type $B_l$, we have that $r = 2$ and $i = 1$, so we wish to show that $\epsilon_1 + w_1(\epsilon_1) = 0$.  Note that $w_1(\alpha_1) = -\alpha_0$ and $w_1$ fixes the other simple roots.  To compute $w_1(\epsilon_1)$, we pair $w_1(\epsilon_1)$ with all of the simple roots.  By Weyl-invariance of the inner product $(\cdot, \cdot)$ and the fact that $w_1^2 = 1$, we have
\begin{equation*}
(w_1(\epsilon_1), \alpha_j) = \left\{
\begin{array}{rll}
0 & \text{if} & j \neq 1\\
(\epsilon_1, -\alpha_0) & \text{if} & j = 1
\end{array} \right.
\end{equation*}
But since $\alpha_0 = \alpha_1 + 2(\alpha_2 + ... + \alpha_l)$, we have that $(\epsilon_1, -\alpha_0) = -1$.  Therefore, $w_1(\epsilon_1) = -\epsilon_1$, so that $\epsilon_1 + w_1(\epsilon_1) = 0$.  
 
\item  In type $C_l$, we have that $i = l$, and the same argument as in type $B_l$ holds.  Indeed, $w_l(\alpha_l) = -\alpha_0$, $w_l$ permutes the simple roots other than $\alpha_l$, and $\alpha_0 = 2(\alpha_1 + ... + \alpha_{l-1}) + \alpha_l$.  Thus, $(w_l(\epsilon_l), \alpha_l) = -1$, so $w_l(\epsilon_l) = -\epsilon_l$, and the result follows.
 
\item We consider type $D_l$.  We first consider the case that $l$ is odd and $G$ is adjoint.  In this case $\Omega \cong \mathbb{Z} / 4 \mathbb{Z}$ and it is enough to consider $i = l$.  The claim is that $\epsilon_l + w_l(\epsilon_l) + w_l^2(\epsilon_l) + w_l^3(\epsilon_l) = 0$.  One can see from the table on \cite[p. 19]{IM65} that $w_l$ permutes $\alpha_2, \alpha_3, ..., \alpha_{l-2}$, and  also acts by $-\alpha_0 \mapsto \alpha_l \mapsto \alpha_1 \mapsto \alpha_{l-1} \mapsto -\alpha_0$. We therefore conclude that $(w_l(\epsilon_l), \alpha_j) = (\epsilon_l, w_l^3 (\alpha_j)) = 0$ if $j = 2, 3, ..., l - 2$.  Moreover, since $w_l^3(\alpha_1) = \alpha_l, w_l^3(\alpha_{l-1}) = \alpha_1$,  $w_l^3(\alpha_l) = -\alpha_0$, we conclude that $(w_l(\epsilon_l), \alpha_1) = 1, (w_l(\epsilon_l), \alpha_{l-1}) = 0,$ and $(w_l(\epsilon_l), \alpha_l) = -1$.  Therefore, $w_l(\epsilon_l) = \epsilon_1 - \epsilon_l$.  One can compute similarly that $w_l^2(\epsilon_l) = \epsilon_{l-1} - \epsilon_1$ and $w_l^3(\epsilon_l) = -\epsilon_{l-1}$.  Therefore, $\epsilon_l + w_l(\epsilon_l) + w_l^2(\epsilon_l) + w_l^3(\epsilon_l) = 0$.
 
We now consider the case where $l$ is odd and $G$ is neither adjoint nor simply connected.  We have that $\rho_l^2 = \rho_1$ generates $\Omega$.  Thus, we need to show that 
$\epsilon_1 + w_1(\epsilon_1) = 0$. First, we note that $w_1$ fixes $\alpha_j$, for $j = 2, 3, ..., l-2$, it exchanges $-\alpha_0$ and $\alpha_1$, and it exchanges $\alpha_{l-1}$ and $\alpha_l$.  Since $w_1$ has order $2$, we compute that
\[
(w_1(\epsilon_1), \alpha_j) = (\epsilon_1, w_1(\alpha_j)) = 0
\]
if $j = 2, ..., l$.  We also have $(w_1(\epsilon_1), \alpha_1) = (\epsilon_1, -\alpha_0) = -1$.  Thus, $w_1(\epsilon_1) = -\epsilon_1$, so the result follows.

We now consider the case that $l$ is even and $G$ is adjoint.  In this case, $\Omega \cong \mathbb{Z} / 2 \mathbb{Z} \times \mathbb{Z} / 2 \mathbb{Z}$.  In the notation of \cite[p. 19]{IM65}, the generators of $\Omega$ are $\rho_1, \rho_{l-1}, \rho_l$.  It is straightforward to compute that $w_1(\epsilon_1) = -\epsilon_1, w_l(\epsilon_{l}) = -\epsilon_l$, and that $w_{l-1}(\epsilon_{l-1}) = -\epsilon_{l-1}$, proving the claim for $G$.  

If $l$ is even and $G$ is neither adjoint nor simply connected, the result follows readily from the adjoint case.

\item We now consider type $E_6$.  Then $r = 3, i = 1$, and $w_1$ acts by $\alpha_1 \mapsto \alpha_6 \mapsto -\alpha_0$.  Since $w_1$ has order $3$, we compute that

\begin{equation*}
(w_1(\epsilon_1), \alpha_j) = (\epsilon_1, w_1^2(\alpha_j)) =  \left\{
\begin{array}{rll}
0 & \text{if} & j \neq 1,6\\
-1 & \text{if} & j = 1\\
1 & \text{if} & j = 6
\end{array} \right.
\end{equation*}
which implies that $w_1(\epsilon_1) = -\epsilon_1 + \epsilon_6$.  Similarly one may compute that $w_1^2(\epsilon_1)  = -\epsilon_6$.  Therefore, $\epsilon_1 + w_1(\epsilon_1) + w_1^2(\epsilon_1) = 0$.
 
\item Type $E_7$ is analogous to types $B_l$ and types $C_l$.  Just note that in this case we have $i = 1$ and $w_1(\alpha_1) = -\alpha_0$, and from  \cite[p. 19]{IM65}  we see that the coefficient of $\alpha_1$ in $\alpha_0$ is $1$.

\item We finally consider type $A_l$.  We may identify roots and co-roots, fundamental weights and fundamental co-weights.  Recall that we may take $\Delta = \{\alpha_1, \alpha_2, ..., \alpha_l \}$ to be $\alpha_i = \alpha_i^{\vee} = e_i - e_{i+1}$ for $1 \leq i \leq l$.  The corresponding fundamental coweights are
\begin{equation*}
\epsilon_i = \epsilon_i^{\vee} = \left\{
\begin{array}{lll}
\frac{1}{l+1} [(l+1-i) (e_1 + e_2 + ... + e_i) - i(e_{i+1} + e_{i+2} + ... + e_{l+1})] & \text{if} & i \leq l
\end{array} \right.
\end{equation*}
We recall that the isogenies of type $A_l$ are in one to one correspondence with the subgroups of $\Omega_{\mathrm{ad}} = \mathbb{Z} / (l + 1) \mathbb{Z}$.  The element $\rho_1$ generates $\Omega_{\mathrm{ad}}$.  Let $a,b \in \mathbb{N}$ such that $l + 1 = ab$.  Let $\omega = \rho_1^a$, so that $\omega^b = 1$.  Let $G$ be the group of type $A_l$ that is given by the subgroup $\langle \omega \rangle$ of $\Omega_{\mathrm{ad}}$.  In particular, its associated cocharacter lattice, which we denote by $X_*(A_l^a)$, is given by $\langle Q^{\vee}, \epsilon_a \rangle$.  For ease of notation, let $n = l+1$.  Then $\omega = \epsilon_a w_a$, where $w_a$ is the a-th power of the $n$-cycle $(1 \ 2 \ \cdots \ n)$.  We  need to show that  $\epsilon_a + w_a(\epsilon_a) + ... + w_a^{b-1}(\epsilon_a) = 0$.  A computation shows that this sum is
\[
\frac{1}{n}[ (n-a)(e_1 + e_2 + ... + e_a) - a(e_{a+1} + ... + e_n)] 
\]
\[ 
+ \frac{1}{n}[ (n-a)(e_{a+1} + e_{a+2} + ... + e_{2a}) - a(e_{2a+1} + ... + e_n + e_1 + e_2 + ... + e_a)]
\]

\[
+ ... + \frac{1}{n} [ (n-a) (e_{n-a+1} + e_{n-a+2} + ... + e_n) - a(e_1 + e_2 + ... + e_{n-a})],
\]
which equals zero.

\end{itemize}
 
\end{proof}
 
\begin{proposition}\label{secondprop}
$ \mathcal{N}_{\circ}(w_i)^r = 1$.
\end{proposition}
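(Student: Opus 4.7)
The plan is to apply Corollary \ref{powersformula}. Since $w_i^r = 1$ forces $\mathcal{N}_{\circ}(w_i^r) = 1$, the claim reduces to showing that
\[
\prod_{m=1}^{r-1} \prod_{\alpha \in \mathcal{F}_{w_i}(m)} \alpha^{\vee}(-1) = 1 \qquad \text{in } T.
\]
Because each factor $\alpha^{\vee}(-1)$ has order dividing $2$ in $T$, this is equivalent to showing that the coroot sum (counted with multiplicity)
\[
S \;:=\; \sum_{m=1}^{r-1} \sum_{\alpha \in \mathcal{F}_{w_i}(m)} \alpha^{\vee}
\]
lies in $2 X_*(T)$. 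I would then carry out a case-by-case analysis, organized exactly as in Proposition \ref{firstprop} and using the explicit action of $w_i$ on $\{-\alpha_0, \alpha_1, \ldots, \alpha_l\}$ from the tables of \cite[pp.~18-19]{IM65}.

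In the involution cases ($r = 2$: types $B_l$, $C_l$, $E_7$, type $D_l$ with $l$ even, and the non-adjoint case of type $D_l$ with $l$ odd), the double sum collapses to the inversion set, $S = \sum_{\alpha \in \mathrm{Inv}(w_i)} \alpha^{\vee}$ where $\mathrm{Inv}(w_i) = \{\alpha \in \Pi : w_i(\alpha) \in -\Pi\}$. In each such case $w_i$ turns out to be a single short-root reflection, and the inversion set is easy to enumerate. For instance, in type $B_l$ with the standard realization $\alpha_j = e_j - e_{j+1}$ for $j < l$ and $\alpha_l = e_l$, a direct computation shows $w_1 = s_{e_1}$, so $\mathrm{Inv}(w_1) = \{e_1\} \cup \{e_1 \pm e_j : 2 \le j \le l\}$, and summing the corresponding coroots gives $2l \cdot e_1 = 2l\,\epsilon_1 \in 2 X_*(T)$. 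The remaining involution cases admit analogous direct verifications.

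For $r > 2$ (type $E_6$, type $D_l$ adjoint with $l$ odd, and the nontrivial subcases of type $A_l$), I would exploit the cyclic structure of the $\langle w_i \rangle$-action on $\Phi^+$. For a fixed $\alpha \in \Pi$, the multiplicity of $\alpha^{\vee}$ in $S$ equals the number of negative-to-positive sign flips in the cyclic sequence $\alpha, w_i(\alpha), \ldots, w_i^{r-1}(\alpha), w_i^r(\alpha) = \alpha$, which depends only on the $\langle w_i\rangle$-orbit of $\alpha$. One then decomposes $\Phi^+$ into orbits and computes the contribution orbit by orbit, reducing the problem to a finite check in each type.

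The main obstacle is type $A_l$, where $w_a = (1\,2\,\cdots\,n)^a$ (with $n = l+1 = ab$) breaks $\Phi^+ = \{e_i - e_j : i<j\}$ into orbits intricately governed by arithmetic modulo $n$. A careful analysis shows that for $1 \le m \le b-1$,
\[
\mathcal{F}_{w_a}(m) \;=\; \{e_i - e_j : n - (m+1)a < i \le n - ma < j \le n\}
\]
(with the convention $n - ba = 0$), and the principal computation is then to verify that summing the corresponding coroots over all admissible $m$ produces an element of $2 X_*(T) = 2\langle Q^{\vee}, \epsilon_a\rangle$, for every divisor $a$ of $n$. This combinatorial identity is the main substantive content of the proposition.
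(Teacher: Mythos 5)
Your plan is structurally identical to the paper's: both reduce via Corollary~\ref{powersformula} to a coroot-sum computation, which is then carried out type by type using the explicit $w_i$-action from the Iwahori--Matsumoto tables. Your reformulation --- that $\prod \alpha^{\vee}(-1)$ is trivial precisely when $\sum \alpha^{\vee} \in 2X_*(T)$ --- is exactly what the paper uses (it writes the sum as a multiple of a fundamental coweight, or works modulo $2X_*$ directly). Your formula for $\mathcal{F}_{w_a}(m)$ in type $A_l$ is correct, and summing it over $m = 1, \ldots, b-1$ recovers precisely the expression appearing in the paper's proof.

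One concrete claim in your involution-case discussion is, however, false: $w_i$ is \emph{not} generally a single short-root reflection when $r = 2$. It is one in type $B_l$ (you correctly identify $w_1 = s_{e_1}$), but in type $C_l$ with $l \geq 3$ the element $w_l$ acts by $e_i \mapsto -e_{l+1-i}$; its fixed subspace has dimension $\lfloor l/2 \rfloor < l - 1$, so it is not a reflection. Similarly, in type $D_l$ with $l$ even, $w_1$ negates both $e_1$ and $e_l$ and fixes the remaining coordinates, which is a product of two commuting reflections. This does not sink your plan: when $r=2$ the set $\mathcal{F}_{w_i}(1)$ is still the inversion set $\{\alpha \in \Pi : w_i(\alpha) \in -\Pi\}$, and it can be enumerated directly from the explicit action of $w_i$ on the $e_j$ --- which is what the paper does, by identifying the inversion set with the positive roots whose simple-root expansion involves a particular $\alpha_j$. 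But if you carry out the argument by ``computing the inversion set of a reflection,'' the stated justification fails everywhere outside $B_l$, and you would need to fall back on this more direct enumeration.
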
 
 
\begin{proof} 
We again assume that $G$ is not simply connected.  We proceed on a type by type basis, beginning with $B_l$ and ending again with type $A_l$.  To compute $\mathcal{N}_{\circ}(w_i)^r$, we use Corollary \ref{powersformula}.  We remind the reader again that the order of $\epsilon_i w_i$ equals the order of $w_i$ (see \cite[p. 18]{IM65}).
\begin{itemize}
\item Suppose that $G$ is of type $B_l$ and adjoint.  We recall that the roots may be identified with the functionals $\pm  e_i (1 \leq i \leq l)$ and $\pm e_i \pm e_j (1 \leq i < j \leq l)$.  The corresponding coroots may be identified (in the obvious way) with the functionals $\pm 2e_i, \pm e_i \pm e_j$.  
The fundamental coweights corresponding to the standard choice of simple roots are given by $\epsilon_i = e_1 + ... + e_i$, for $1 \leq i \leq l$.

We note that the cocharacter lattice of type $B_l$ adjoint is $\langle Q^{\vee}, \epsilon_1 \rangle$.  The action of $w_1$ exchanges $-\alpha_0$ and $\alpha_1$.  Therefore, $\mathcal{F}_{w_1}(1)$ is the set of positive roots that contain $\alpha_1$.  In other words, $\mathcal{F}_{w_1}(1) = \{ e_1 + e_j : j = 2,3,...,l \} \cup \{ e_1 - e_j : j = 2, 3, ..., l \} \cup \{e_1 \}$.  One may therefore compute that 
\[
\displaystyle\sum_{\alpha \in \mathcal{F}_{w_1}(1)} \alpha^{\vee} = \displaystyle\sum_{j > 1} (e_1 + e_j)^{\vee} + \displaystyle\sum_{j > 1} (e_1 - e_j)^{\vee} + e_1^{\vee} = \displaystyle\sum_{j > 1} (e_1 + e_j) + \displaystyle\sum_{j > 1} (e_1 - e_j) + 2e_1,
\]
where we have identified $e_1^{\vee}$ with $2e_1$ in the usual way.  Writing $e_1 + e_j$ and $e_1 - e_j$ as sums of simple coroots, one may compute that 

\[
 \displaystyle\sum_{j > 1} (e_1 + e_j) + \displaystyle\sum_{j > 1} (e_1 - e_j) + 2e_1 = 2l \alpha_1^{\vee} + 2l \alpha_2^{\vee} + ... + 2l \alpha_{l-1}^{\vee} + l \alpha_l^{\vee}.
 \]

Noting that $\epsilon_1 = \alpha_1^{\vee} + \alpha_2^{\vee} + ... + \alpha_{l-1}^{\vee} + \frac{1}{2} \alpha_l^{\vee}$, we have that $\displaystyle\sum_{\alpha \in \mathcal{F}_{w_1}(1)} \alpha^{\vee} = 2l \epsilon_1$.  Therefore, $\mathcal{N}_{\circ}(w_1)^2 = (\epsilon_1)(-1)^{2l} =  1$.

\item We now turn to type $C_l$ adjoint.  We recall that the roots may be identified with the functionals $\pm 2 e_i (1 \leq i \leq l)$ and $\pm e_i \pm e_j (1 \leq i < j \leq l)$.  The corresponding coroots may be identified (in the obvious way) with the functionals $\pm e_i, \pm e_i \pm e_j$.  The fundamental coweights corresponding to the standard choice of simple roots are $\epsilon_i = e_1 + ... + e_i$, for $1 \leq i < l$, and $\epsilon_l = \frac{1}{2}(e_1 + e_2 + ... + e_l)$.

We note that the cocharacter lattice of type $C_l$ adjoint is $\langle Q^{\vee}, \epsilon_l \rangle$.  
The action of $w_l$ exchanges $-\alpha_0$ and $\alpha_l$.  Therefore, $\mathcal{F}_{w_l}(1)$ is the set of all positive roots that contain $\alpha_l$, so that $\mathcal{F}_{w_l}(1) = \{ e_i + e_j : 1 \leq i < j \leq l \} \cup \{2 e_i : 1 \leq i \leq l \}$.  Therefore, one may compute that 
\[
\displaystyle\sum_{\alpha \in \mathcal{F}_{w_l}(1)} \alpha^{\vee} = \displaystyle\sum_{i < j} (e_i + e_j)^{\vee} + \displaystyle\sum_i (2e_i)^{\vee} = \displaystyle\sum_{i < j} (e_i + e_j) + \displaystyle\sum_{i} e_i.
\]
Writing $e_i + e_j$ and $e_i$ as sums of simple coroots, we may compute that 
\[
\displaystyle\sum_{i < j} (e_i + e_j) + \displaystyle\sum_{i} e_i = l \alpha_1^{\vee} + 2l \alpha_2^{\vee} + 3l \alpha_3^{\vee} + ... + l^2 \alpha_l^{\vee}.
\]
Recalling that $\epsilon_l = \frac{1}{2} (\alpha_1^{\vee} + 2 \alpha_2^{\vee} + 3 \alpha_3^{\vee} + ... + l \alpha_l^{\vee})$, we have that $\displaystyle\sum_{\alpha \in \mathcal{F}_{w_l}(1)} \alpha^{\vee} = 2l \epsilon_l$.  Therefore, $\mathcal{N}_{\circ}(w)^2 = (\epsilon_l)(-1)^{2l} = 1$.

\item We now consider type $D_l$.  We recall that the root system of type $D_l$ is realized as the set of all $\pm e_i \pm e_j$, with $i < j$.  Since all roots $\alpha$ satisfy $||\alpha||^2 = 2$, we may identify roots and co-roots, fundamental weights and fundamental co-weights.  Recall that we may take $\Delta = \{\alpha_1, \alpha_2, ..., \alpha_l \}$ to be
\begin{equation*}
\alpha_i = \alpha_i^{\vee} = \left\{
\begin{array}{rll}
e_i - e_{i+1} & \text{if} & i \leq l - 1\\
e_{l-1} + e_l &  \text{if} & i = l
\end{array} \right.
\end{equation*}

The corresponding fundamental weights are

\begin{equation*}
\epsilon_i = \epsilon_i^{\vee} = \left\{
\begin{array}{lll}
e_1 + ... + e_i & \text{if} & i < l - 1\\
\frac{1}{2} (e_1 + ... + e_{l-1} - e_l) &  \text{if} & i = l-1 \\
\frac{1}{2} (e_1 + ... + e_{l-1} + e_l) &  \text{if} & i = l
\end{array} \right.
\end{equation*}

First we consider the case where $l$ is even.   By \cite[p. 19]{IM65}, $\Omega_{\mathrm{ad}}$ is generated by the elements $\rho_1, \rho_{l-1}, \rho_l$, and the actions of their corresponding Weyl elements $w_1, w_{l-1}, w_l$ on the set $\{-\alpha_0, \alpha_1, \alpha_2, ..., \alpha_l \}$ are given by
$$w_1 (-\alpha_0) = \alpha_1, \ \ w_1(\alpha_1) = -\alpha_0, \ \ w_1 (\alpha_i) = \alpha_i \ \ \ \ (2 \leq i \leq l -2)$$
$$w_1 (\alpha_{l-1}) = \alpha_l, \ \ w_1 (\alpha_l) = \alpha_{l-1}.$$
$$w_l(-\alpha_0) = \alpha_l, \ \ w_l(\alpha_l) = -\alpha_0, \ \ w_l(\alpha_i) = \alpha_{l-i} \ \ (1 \leq i \leq l-1). \ \ \ \ w_{l-1} = w_l w_1.$$

Consider the isogeny given by the subgroup $\langle \rho_l \rangle$ in $\Omega_{\mathrm{ad}}$.   Its associated cocharacter lattice, which we denote by $X_*(D_l^l)$, is given by $X_*(D_l^l) = \langle Q^{\vee}, \epsilon_l \rangle$.  We must compute $\mathcal{F}_{w_l}(1)$.  As $w_l$ exchanges $-\alpha_0$ and $\alpha_l$, $\mathcal{F}_{w_l}(1)$ is the set of positive roots that contain $\alpha_l$.  By \cite[Plate IV]{Bou02}, the sum of all of the (co)roots in $\mathcal{F}_{w_l}(1)$ is then equal to 
\[
(l-1)(\alpha_1 + 2 \alpha_2 + 3 \alpha_3 + ... + (l-2) \alpha_{l-2} + \frac{1}{2} (l-2) \alpha_{l-1} + \frac{1}{2} l \alpha_l).
\]
But notice that $\epsilon_l = \frac{1}{2}(\alpha_1 + 2 \alpha_2 + 3 \alpha_3 + ... + (l-2) \alpha_{l-2} + \frac{1}{2} (l-2) \alpha_{l-1} + \frac{1}{2} l \alpha_l).$ Therefore, $\mathcal{N}_{\circ}(w_l)^2 = (\epsilon_l)(-1)^{2(l-1)} = 1.$

We now consider the isogeny given by the subgroup $\langle \rho_1 \rangle$ of $\Omega_{\mathrm{ad}}$.  Its associated cocharacter lattice, which we denote by $X_*(D_l^1)$, is given by$X_*(D_l^1) = \langle Q^{\vee}, \epsilon_1 \rangle$.  Since $w_1$ exchanges $-\alpha_0$ and $\alpha_1$, $\mathcal{F}_{w_1}(1)$ is the set of all positive roots containing $\alpha_1$.  By \cite[Plate IV]{Bou02}, the sum of all of the (co)roots in $\mathcal{F}_{w_1}(1)$ is 
\[
(l-1)(2 \alpha_1 +  2 \alpha_2 +  2 \alpha_3 + ... +  2 \alpha_{l-2} + \alpha_{l-1} + \alpha_l).
\]
But notice that $\epsilon_1 = \alpha_1 + \alpha_2 + \alpha_3 + ... + \alpha_{l-2} + \frac{1}{2} \alpha_{l-1} + \frac{1}{2} \alpha_l$, so  $\mathcal{N}_{\circ}(w_1)^2 = (\epsilon_1)(-1)^{2(l-1)} = 1.$

We now consider the isogeny given by the subgroup $\langle \rho_{l-1} \rangle$ of $\Omega_{\mathrm{ad}}$, whose associated cocharacter lattice we denote by $X_*(D_l^{l-1})$.  To describe $\mathcal{F}_{w_{l-1}}(1)$, we must describe all positive roots that contain $\alpha_{l-1}$. By \cite[Plate IV]{Bou02}, the sum of all of the (co)roots in $\mathcal{F}_{w_{l-1}}(1)$ is $$(l-1)(\alpha_1 + 2 \alpha_2 + 3 \alpha_3 + ... + (l-2) \alpha_{l-2} + \frac{l}{2} \alpha_{l-1} + \frac{l-2}{2} \alpha_l).$$  But $X_*(D_l^{l-1}) = \langle Q^{\vee}, \epsilon_{l-1} \rangle$, and $\epsilon_{l-1} = \frac{1}{2} (\alpha_1 + 2 \alpha_2 + 3 \alpha_3 + ... + (l-2) \alpha_{l-2} + \frac{l}{2} \alpha_{l-1} + \frac{l-2}{2} \alpha_l).$
Thus, $\mathcal{N}_{\circ}(w_{l-1})^2 = (\epsilon_{l-1})(-1)^{2(l-1)} = 1$.

We now turn to $D_l$ with $l$ odd.  First we consider the adjoint case, denoting the associated cocharacter lattice by $X_*(D_l^{ad})$.  To show that $\mathcal{N}_{\circ}(w_l)^4 = 1$, we need to compute the sum 
\[
\gamma := \displaystyle\sum_{\alpha \in \mathcal{F}_{w_l}(1)} \alpha + \displaystyle\sum_{\beta \in \mathcal{F}_{w_l}(2)} \beta + \displaystyle\sum_{\delta \in \mathcal{F}_{w_l}(3)} \delta.
\] 
Noting that $$\mathcal{F}_{w_l}(1) = \{\alpha \in \Pi : \alpha \ \mathrm{contains} \ \alpha_{l-1} \ \mathrm{but \ doesn't \ contain} \ \alpha_1 \}$$
$$\mathcal{F}_{w_l}(2) =\{\alpha \in \Pi : \alpha \ \mathrm{contains} \ \alpha_1 \ \mathrm{but \ doesn't \ contain} \ \alpha_l \}$$
$$\mathcal{F}_{w_l}(3) =\{\alpha \in \Pi : \alpha \ \mathrm{contains} \ \alpha_l \},$$
one computes that 
\[
\gamma = 2(l-2)(\alpha_1 + 2 \alpha_2 + ... + (l-2) \alpha_{l-2}) + 2(\alpha_1 + \alpha_2 + ... + \alpha_{l-2}) 
\]
\[
 + ((l-2)(l-1) + 1) \alpha_{l-1} + \left(\frac{(l-3)(l-2)}{2} + \frac{(l-1)l}{2}\right) \alpha_l.
\]

Modulo $2 X_*(D_l^{ad})$, $\gamma$ is equivalent to $\alpha_{l-1} + \alpha_l.$  But $\alpha_{l-1} + \alpha_l \equiv 2\epsilon_1 \ (\mathrm{mod} \ 2X_*(D_l^{ad}))$, so  $\mathcal{N}_{\circ}(w_l)^4 = 1$ as needed.

We now consider the group $G$, of type $D_l$, with $l$ odd, that is neither simply connected nor adjoint.  We need to show that $\mathcal{N}_{\circ}(w_l^2)^2 = 1$.  First, we recall that $w_l^2$ fixes $\alpha_i$, for $i = 2, 3, ..., l-2$ and it exchanges $-\alpha_0$ and $\alpha_1$, and exchanges $\alpha_{l-1}$ and $\alpha_l$.  We must therefore count the positive roots that contain $\alpha_1$.  But this has already been computed in the $D_l$ cases with $l$ even, and our results there imply that $\mathcal{N}_{\circ}(w_l^2)^2 = 1$, noting that the cocharacter lattice in the current case is given by $\langle Q^{\vee}, \epsilon_1 \rangle$.

\item We now turn to the group $G$ of type $E_6$ and adjoint.  We follow here \cite[Plate V]{Bou02}, which has different conventions than \cite{IM65}.  The Weyl element $w$ in question acts by $\alpha_1 \mapsto \alpha_6 \mapsto -\alpha_0$.  Therefore, we need to compute the sum of all roots $\alpha$ that contain $\alpha_1$, together with all roots that contain $\alpha_6$ that also do not contain $\alpha_1$ .  One computes that this sum is
$$\displaystyle\sum_{\alpha \in \mathcal{F}_{w}(1)} \alpha^{\vee} + \displaystyle\sum_{\beta \in \mathcal{F}_w(2)} \beta^{\vee} = 16 \alpha_1 + 16 \alpha_2 + 24 \alpha_3 + 32 \alpha_4 + 24 \alpha_5 + 16 \alpha_6 \in 2 Q^{\vee}.$$  Therefore, $\mathcal{N}_{\circ}(w)^3 = 1$.

\item We now turn to $E_7$ adjoint.  We need to show that $\mathcal{N}_{\circ}(w)^2 = 1$, where $w$ is the Weyl element in question.  We follow here \cite[Plate VI]{Bou02}, which has different conventions than \cite{IM65}.  Using the fact that $w$ exchanges $\alpha_7$ and $-\alpha_0$, one counts that the sum of all of the positive roots that contain $\alpha_7$ is  
$$\displaystyle\sum_{\alpha \in \mathcal{F}_w(1)} \alpha^{\vee} = 18 \alpha_1 + 27 \alpha_2 + 36 \alpha_3 + 54 \alpha_4 + 45 \alpha_5 + 36 \alpha_6 + 27 \alpha_7.$$  But this sum is exactly equal to $18 \epsilon_7$, so $\mathcal{N}_{\circ}(w)^2 = ( \epsilon_7)(-1)^{18} = 1$.

\item We finally consider type $A_l$.  We re-adopt our conventions and notation from the proof of Proposition \ref{firstprop} in the case of type $A_l$.  That is, we let $a,b \in \mathbb{N}$ such that $l + 1 = ab$.  Let $\omega = \rho_1^a$, so that $\omega^b = 1$, and for ease of notation, let $n = l+1$.  Then $\omega = \epsilon_a w_a$, where $w_a$ is the a-th power of the $n$-cycle $(1 \ 2 \ \cdots \ n)$.  

A computation then shows that $$\displaystyle\sum_{m = 1}^{b-1} \displaystyle\sum_{\alpha \in \mathcal{F}_{w_a}(m)} \alpha^{\vee} = (n-a)[e_1 + e_2 + ... + e_a] + (n-3a)[e_{a+1} + e_{a+2} + ... + e_{2a}] $$ $$+ (n-5a)[e_{2a+1} + ... + e_{3a}]  + ... +(a-n)[e_{n-a+1} + e_{n-a+2} + ... + e_{n-1} + e_n].$$ We denote this sum by $\gamma$.  We recall that the cocharacter lattice of this isogeny is given by $X_*(A_l^a) = \langle Q^{\vee}, \epsilon_a \rangle$, where 
$$\epsilon_a = \frac{1}{n}[ (n-a)(e_1 + e_2 + ... + e_a) - a(e_{a+1} + ... + e_n)].$$
Suppose first that $n$ is odd, so that $a$ is also odd.  Therefore, $n - a, n - 3a, n - 5a, ..., a - n$ are all even, so one can see that $\gamma \in 2 Q^{\vee}$, which implies that $\mathcal{N}_{\circ}(w_a)^b = 1$.
Suppose now that $n$ is even.  Then 
\[
\gamma - n \epsilon_a = (n-2a)[e_{a+1} + e_{a+2} + ... + e_{2a}] + (n-4a)[e_{2a+1} + ... + e_{3a}]
\]
\[
+ ... +(2a-n)[e_{n-a+1} + e_{n-a+2} + ... + e_{n-1} + e_n].
\] 
One can see that $\gamma - n \epsilon_a =: \eta \in 2 Q^{\vee}$.  Therefore, $\gamma = \eta + n \epsilon_a$ is twice a cocharacter, so $\mathcal{N}_{\circ}(w_a)^b = 1$.
\end{itemize}
\end{proof}

\begin{remark}
The previous argument in the case of type $A_l$ depends on the group not being simply connected.  Otherwise, it may not be that $\mathcal{N}_{\circ}(w_a)^b = 1$.  Indeed, if $n$ is even, we relied on the fundamental coweight $\epsilon_a$ being contained in the cocharacter lattice in order to conclude that $\mathcal{N}_{\circ}(w_a)^b = 1$.  If $n$ is odd, however, it was automatic that $\mathcal{N}_{\circ}(w_a)^b = 1$.  Indeed, this does not conflict with a basic known example; if $G = SL(n)$ and $w$ is the long Weyl element, then 
\begin{equation*}
\mathcal{N}_{\circ}(w)^n = \left\{
\begin{array}{lll}
-1 & \text{if} &  n \ \text{is even}\\
1 &  \text{if} & n \ \text{is odd}
\end{array} \right.
\end{equation*}
\end{remark}

\begin{theorem}\label{mainresult}
For $G$ a split, almost simple, $p$-adic group, there exists an embedding $\Omega \hookrightarrow N_G(T)$ that is also a section of the canonical map $N_G(T) \rightarrow \Omega$.
\end{theorem}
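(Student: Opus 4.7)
The plan is to build a homomorphic section $\tilde\iota : \Omega \hookrightarrow N_G(T)$ of the projection $N_G(T) \to \Omega$, starting from the set-theoretic section $\iota$, by a case split on the structure of $\Omega$. Assuming $G$ is not simply connected (otherwise $\Omega = 1$ and the statement is vacuous), inspection of the root data for almost-simple $G$ shows that $\Omega$ is always abelian and is either cyclic or isomorphic to $\Z/2\Z \times \Z/2\Z$; the latter occurs only in adjoint type $D_l$ with $l$ even.

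In the cyclic case, fix a generator $\omega = \epsilon_i w_i$ of $\Omega$ of order $r$. The computation performed just before Proposition \ref{firstprop} gives
\[
\iota(\omega)^r = \Bigl(\sum_{j=0}^{r-1} w_i^j(\epsilon_i)\Bigr)(\varpi^{-1}) \cdot \mathcal{N}_{\circ}(w_i)^r,
\]
and both factors are trivial by Propositions \ref{firstprop} and \ref{secondprop}, respectively. Hence the assignment $\omega^k \mapsto \iota(\omega)^k$ is a well-defined group homomorphism $\tilde\iota : \Omega \to N_G(T)$. Since $\iota$ is a section, $\iota(\omega)$ projects to $\omega$ under $N_G(T) \to \Omega$, and so $\iota(\omega)^k$ projects to $\omega^k$, showing that $\tilde\iota$ is a section and in particular injective.

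In the non-cyclic case, take generators $\rho_1, \rho_l$ of $\Omega \cong \Z/2\Z \times \Z/2\Z$. The cyclic argument applied to each cyclic subgroup yields $\iota(\rho_1)^2 = \iota(\rho_l)^2 = 1$. To extend $\rho_1 \mapsto \iota(\rho_1)$, $\rho_l \mapsto \iota(\rho_l)$ to a homomorphism $\tilde\iota(\rho_1^a \rho_l^b) = \iota(\rho_1)^a \iota(\rho_l)^b$, the only remaining condition is that $\iota(\rho_1)$ and $\iota(\rho_l)$ commute in $N_G(T)$. I would verify this directly, expanding both sides via Proposition \ref{ahomomorphicity} to handle the $\mathcal{N}_{\circ}$-products, together with the explicit action of $w_1, w_l$ on $\epsilon_1, \epsilon_l$ read off from the $D_l$ tables of \cite{IM65}. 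Once commutativity is established, $\tilde\iota$ is a homomorphism, and the same projection argument as in the cyclic case shows it is a section.

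The main obstacle is the commutativity check in the $\Z/2\Z \times \Z/2\Z$ case, since the cyclic case follows formally from Propositions \ref{firstprop} and \ref{secondprop}. Commutativity reduces to matching a torus contribution (comparing $w_1(\epsilon_l)$ with $w_l(\epsilon_1)$ modulo the relevant cocharacter lattice) with a sign cocycle contribution (comparing $\prod_{\alpha \in \mathcal{F}(w_1,w_l)} \alpha^{\vee}(-1)$ with $\prod_{\alpha \in \mathcal{F}(w_l,w_1)} \alpha^{\vee}(-1)$); these should cancel using the symmetries of the $D_l$ root system when $l$ is even.
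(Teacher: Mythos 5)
Your plan matches the paper's proof almost exactly. In the cyclic case the argument is identical: Propositions \ref{firstprop} and \ref{secondprop} give $\iota(\omega)^r = 1$, and the assignment $\omega^k \mapsto \iota(\omega)^k$ is a well-defined homomorphic section; your observation that injectivity follows because $\iota(\omega)^k$ projects to $\omega^k$ is in fact a slightly cleaner way to see it than the paper's projection-to-$W_\circ$ argument. In the non-cyclic case (adjoint $D_l$, $l$ even) you verify commutativity of $\iota(\rho_1)$ and $\iota(\rho_l)$ and then define $\tilde\iota$ formulaically, whereas the paper directly checks multiplicativity $\iota(\rho_1\rho_l)=\iota(\rho_1)\iota(\rho_l)$ (and the analogous identities), using Proposition \ref{ahomomorphicity} and the observation that $\sum_{\alpha\in\mathcal{F}(w_1,w_l)}\alpha^\vee$ lies in $2X_*(D_l^{\mathrm{ad}})$; these are essentially equivalent routes. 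One small imprecision in your sketch: the torus part $(\epsilon_1+w_1(\epsilon_l))(\varpi^{-1})$ versus $(\epsilon_l+w_l(\epsilon_1))(\varpi^{-1})$ and the sign part $\prod\alpha^\vee(-1)$ live in independent ``directions'' of $T(F)$ (valuation versus units), so they cannot cancel against each other; you must verify separately that $\epsilon_1+w_1(\epsilon_l)=\epsilon_l+w_l(\epsilon_1)$ (both equal $\epsilon_{l-1}$) and that $\prod_{\alpha\in\mathcal{F}(w_1,w_l)}\alpha^\vee(-1)=\prod_{\alpha\in\mathcal{F}(w_l,w_1)}\alpha^\vee(-1)$. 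Both do hold for $l$ even, so the plan goes through.
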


\begin{proof}
Suppose that $\Omega$ is cyclic of order $n$.  We have shown that if $\omega = \epsilon_i w_{\Delta_i} w_{\Delta} \in \Omega$ is a generator, then $\iota(\omega)^n = 1$.  But in fact $\iota(\omega)$ has order $n$.  To see this, note that if $m < n$, then $\iota(\omega)^m = (\epsilon_i + w_i(\epsilon_i) + ... + w_i^{m-1}(\epsilon_i))(\varpi^{-1}) \cdot \mathcal{N}_{\circ}(w_i)^m$.  But $\mathcal{N}_{\circ}(w_i)^m$ has a nontrivial projection to $W_{\circ}$ since $w_i$ has order $n$.  Therefore, $\iota(\omega)^m$ has a nontrivial projection to $W_{\circ}$ as well, so in particular must be nontrivial.  Since $\iota(\omega)$ has order $n$, we may define a homomorphism $\Omega \rightarrow N_G(T)$ by sending $\omega^j$ to $\iota(\omega)^j$, and one may check that this map is in fact a section of the map $N_G(T) \rightarrow \Omega$.

It remains to consider the case where $G$ is adjoint of type $D_l$ with $l$ even, since its fundamental group is not cyclic.  We denote the associated cocharacter lattice by $X_*(D_l^{ad})$.  We show that $\iota$ is a homomorphism in this case.  Recall that in Proposition \ref{secondprop}, we showed that $\iota(\omega)^2 = 1$ for each $\omega \in \Omega$.  We need to show that $\iota(\rho_1 \rho_l) = \iota(\rho_1) \iota(\rho_l), \iota(\rho_1 \rho_{l-1}) = \iota(\rho_1) \iota(\rho_{l-1}),$ and $\iota(\rho_{l-1} \rho_l) = \iota(\rho_{l-1}) \iota(\rho_l)$. We will carry out the case $\iota(\rho_1 \rho_l) = \iota(\rho_1) \iota(\rho_l)$, noting that the other cases are similar. First note that $\iota(\rho_1 \rho_l) = \iota(\rho_{l-1}) = \epsilon_{l-1} \mathcal{N}_{\circ}(w_{l-1})$ and $\iota(\rho_1) \iota(\rho_l) = \epsilon_1 \mathcal{N}_{\circ}(w_1) \epsilon_l \mathcal{N}_{\circ}(w_l) =  \epsilon_1 \mathcal{N}_{\circ}(w_1) \epsilon_l \mathcal{N}_{\circ}(w_1)^{-1} \mathcal{N}_{\circ}(w_1) \mathcal{N}_{\circ}(w_l)$.  One can compute that $\epsilon_1 \mathcal{N}_{\circ}(w_1) \epsilon_l \mathcal{N}_{\circ}(w_1)^{-1} = \epsilon_{l-1}$, so it suffices to show that $\mathcal{N}_{\circ}(w_1) \mathcal{N}_{\circ}(w_l) = \mathcal{N}_{\circ}(w_{l-1})$.  By Corollary \ref{ahomomorphicity}, we need to show that $\displaystyle\prod_{\alpha \in \mathcal{F}(w_1,w_l) } \alpha^{\vee}(-1) = 1.$  One computes that
$$\mathcal{F}(w_1, w_l) = \{\alpha \in \Pi : w_l(\alpha) \in -\Pi, w_1 w_l(\alpha) \in \Pi \} = \{\alpha \in \Pi : \alpha \ \mathrm{contains \ } \alpha_l \ \mathrm{and \ } \alpha \ \mathrm{does \ not \ contain} \ \alpha_{l-1} \}.$$

This last set, by \cite[Plate IV]{Bou02}, is the set $\{e_i + e_l : 1 \leq i < l \}$.  Adding these roots together gives $\gamma := \alpha_1 + 2 \alpha_2 + 3 \alpha_3 + ... + (l-2) \alpha_{l-2} + (l-1) \alpha_l$.  But $X_*(D_l^{ad})$ contains $\epsilon_{l-1}, \epsilon_l$, and we see that $\gamma = (2-l) \epsilon_{l-1} + l \epsilon_l$, which lives in $2 X_*(D_l^{ad})$ since $l$ is even.  The result follows.
\end{proof}

\begin{remark}\label{nonhomomorphic} $ $
\begin{enumerate} 
\item It is not difficult to show that $\iota$ is a homomorphism in the case that $G$ is adjoint of type $E_6$.  Since we cannot claim this for all types, we do not include the computation.
\item In the case that $G$ is adjoint of type $D_l$ where $l$ is odd, one can show that $\iota$ is not a homomorphism.  In fact, one can show that $\iota(\rho_l)^2 = \iota(\rho_l^2)$, but it turns out that $\iota(\rho_l)^3 \neq \iota(\rho_l^3)$.  This boils down to computing that the sum of all (co)roots in $\mathcal{F}_{w_l}(2)$ equals $le_1 - (e_1 + e_2 + ... + e_l)$, which when evaluated at $-1$ is nontrivial.
\item In the case that $G$ is type $A_l$, it turns out that $\iota$ is sometimes a homomorphism and sometimes not.  For example, if $a = 1$ (in the notation of Proposition \ref{secondprop}), then the group in consideration if $PGL_n$ (recall that in our notation, $n = l+1 = ab$), and one can show that $\iota(\rho_1)^2 \neq \iota(\rho_1^2)$.  On the other hand, if both $n$ and $a$ are even, then $\iota$ is a homomorphism.
\end{enumerate}
\end{remark}

\section{Beyond split almost-simple groups}
One may ask about generalizing Theorem \ref{mainresult} to more general connected reductive groups.   The biggest obstacle to generalizing the result, using the methods in this paper, revolves around the fact that if $W_{\circ}(\Omega)$ denotes the projection of $\Omega$ onto the finite Weyl group, then $\mathcal{N}_{\circ}|_{W_{\circ}(\Omega)} : W_{\circ}(\Omega) \rightarrow N_G(T)$ is not necessarily a homomorphism.  This problem occurred in some $A_l$ types, as well as adjoint $D_l$ with $l$ odd.  But in these cases, we were able to skirt this issue by adjusting $\iota$ as in Theorem \ref{mainresult}, using the fact that $\Omega$ is cyclic.

On the other hand, we are able to extend our result to certain additional split connected reductive groups.  Note first that since $G_{\mathrm{ad}}$ is a product of split, almost-simple groups, Theorem \ref{mainresult} gives a section $s_{G_{\mathrm{ad}}}$ of $\kappa_{G_{\mathrm{ad}}} : G_{\mathrm{ad}}(F) \rightarrow \Omega_{G_{\mathrm{ad}}}$.

\begin{definition}
Call a homomorphic section $s_G$ of $\kappa_G$ \emph{good} if it is compatible with the one constructed for $G_{\mathrm{ad}}$.  In other words, the following diagram commutes:
\[ \begin{tikzcd}
 \arrow[swap]{d}{\kappa_G} G(F)  \arrow{r} & G_{\mathrm{ad}}(F) \arrow[swap]{d}{\kappa_{G_{\mathrm{ad}}}}   \\%
 \arrow[swap, shift right=2]{u}{s_G}  \Omega_G \arrow{r}  &  \arrow[swap, shift right=2]{u}{s_{G_{\mathrm{ad}}}} \Omega_{\mathrm{ad}} 
\end{tikzcd}
\]
\end{definition}

\begin{remark}
Recall that when $G_{\mathrm{der}} = G_{\mathrm{sc}}$, there is an easy way to produce a homomorphic section with values in $T(F)$.  However, this will not generally make the diagram commute, so it is not good.
\end{remark}

\begin{proposition}\label{connectedcenter}
Let $G$ be a split connected reductive group over $F$.  Let $C$ be an alcove in the apartment corresponding to a split maximal torus $T$, with associated extended affine Weyl group $W = X_*(T) \rtimes W_{\circ}$.  Then:
\begin{enumerate}
\item If $Z = Z(G)$ is connected, then the induced map $G(F) / Z(\mathcal{O}_F) \rightarrow \Omega_G$ has a good homomorphic section (the analogue of the diagram above commutes).
\item If $Z$ is connected and $\Omega_G \cong \mathbb{Z}$ (e.g. $G = GSp(2n)$), then $\kappa_G$ has a good homomorphic section.
\item If $Z$ is connected, $\Omega_G \cong \mathbb{Z}^n$, with $n > 1$, and $(|\Omega_{G_{\mathrm{ad}}}|, q(q-1)) = 1$, where $q$ is the cardinality of the residue field, then $\kappa_G$ has a good homomorphic section.
\end{enumerate}
\end{proposition}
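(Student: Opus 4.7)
The overall strategy is to lift the adjoint section $s_{G_{\mathrm{ad}}}$ through the central isogeny $G \to G_{\mathrm{ad}}$ with kernel $Z$, exploiting the hypothesis that $Z$ is connected. Since $Z$ is then a split torus over the $p$-adic field $F$, Hilbert's Theorem 90 gives $H^1(F, Z) = 0$, so the induced map $G(F) \to G_{\mathrm{ad}}(F)$ is surjective, and every element of $G_{\mathrm{ad}}(F)$ lifts to $G(F)$ uniquely up to multiplication by $Z(F)$. I will use repeatedly the observation that $Z(F) \cap G(F)^1 = Z(\mathcal{O}_F)$, where $G(F)^1 := \ker \kappa_G$: indeed, the restriction of $\kappa_G$ to $Z(F)$ factors as the valuation map $Z(F) \twoheadrightarrow X_*(Z) \hookrightarrow \Omega_G$, whose kernel is precisely $Z(\mathcal{O}_F)$ (note that $X_*(Z) \cap Q^{\vee} = 0$ when $Z$ is connected).

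For (1), the key observation is that the commutative square defining ``goodness'', combined with the equality $Z(F) \cap G(F)^1 = Z(\mathcal{O}_F)$, makes the natural map $G(F)/Z(\mathcal{O}_F) \to G_{\mathrm{ad}}(F) \times_{\Omega_{G_{\mathrm{ad}}}} \Omega_G$ \emph{injective} (the two kernels now have trivial intersection). I would fix a set-theoretic section $\sigma \colon \Omega_{G_{\mathrm{ad}}} \to \Omega_G$ of the projection, and for each $\bar\omega \in \Omega_{G_{\mathrm{ad}}}$ choose a lift $g_{\bar\omega} \in G(F)/Z(\mathcal{O}_F)$ of $s_{G_{\mathrm{ad}}}(\bar\omega)$ satisfying $\kappa_G(g_{\bar\omega}) = \sigma(\bar\omega)$; this is possible because any two lifts differ by an element of $Z(F)/Z(\mathcal{O}_F) \cong X_*(Z)$, which maps isomorphically onto $\ker(\Omega_G \twoheadrightarrow \Omega_{G_{\mathrm{ad}}})$. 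Writing every $\tilde\omega \in \Omega_G$ uniquely as $\sigma(\bar\omega) + \lambda$ with $\lambda \in X_*(Z)$, I define $s_G(\tilde\omega) := g_{\bar\omega} \cdot \lambda(\varpi^{-1})$. To verify the homomorphism property, I observe that both $s_G(\tilde\omega) s_G(\tilde\omega')$ and $s_G(\tilde\omega \tilde\omega')$ have the same image $s_{G_{\mathrm{ad}}}(\overline{\tilde\omega \tilde\omega'})$ in $G_{\mathrm{ad}}(F)$ and the same image $\tilde\omega + \tilde\omega'$ in $\Omega_G$; by the injectivity observation they coincide in $G(F)/Z(\mathcal{O}_F)$. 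Goodness holds by construction.

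For (2), since $\Omega_G \cong \mathbb{Z}$, I pick a generator $\omega_0$, lift $s_{G_{\mathrm{ad}}}(\bar\omega_0)$ to some $g \in G(F)$, then adjust $g$ by an element of $Z(F)$ so that $\kappa_G(g) = \omega_0$ (possible because $\kappa_G(Z(F)) = X_*(Z)$ surjects onto the kernel of $\Omega_G \twoheadrightarrow \Omega_{G_{\mathrm{ad}}}$). Defining $s_G(k) := g^k$ gives a good homomorphic section. Part (3) proceeds the same way on a $\mathbb{Z}$-basis $\omega_1, \dots, \omega_n$, producing lifts $g_i \in G(F)$ of $s_{G_{\mathrm{ad}}}(\bar\omega_i)$ with $\kappa_G(g_i) = \omega_i$. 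The map $\omega_i \mapsto g_i$ extends to a good homomorphic section $\mathbb{Z}^n \to G(F)$ if and only if the $g_i$ pairwise commute, since $\Omega_G$ is free abelian. The main obstacle is therefore to verify commutativity, which is where the arithmetic hypothesis $(|\Omega_{G_{\mathrm{ad}}}|, q(q-1))=1$ enters.

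For this final step, set $c := [g_i, g_j]$. Since $s_{G_{\mathrm{ad}}}(\bar\omega_i)$ and $s_{G_{\mathrm{ad}}}(\bar\omega_j)$ commute in $G_{\mathrm{ad}}(F)$ (the image of $s_{G_{\mathrm{ad}}}$ is abelian), $c$ maps to $1$ in $G_{\mathrm{ad}}(F)$; hence $c \in Z(F)$. Moreover $\kappa_G(c) = 0$, so $c \in Z(F) \cap G(F)^1 = Z(\mathcal{O}_F)$. Let $d_i$ be the order of $\bar\omega_i$ in $\Omega_{G_{\mathrm{ad}}}$; then $g_i^{d_i}$ lifts $s_{G_{\mathrm{ad}}}(\bar\omega_i)^{d_i} = 1$ and so lies in $Z(F)$, making $g_i^{d_i}$ central in $G(F)$. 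Since $c$ is central, induction from the identity $g_i g_j = g_j g_i c$ yields $g_i^n g_j g_i^{-n} g_j^{-1} = c^n$, and taking $n = d_i$ gives $c^{d_i} = 1$; similarly $c^{d_j} = 1$, so $c$ has order dividing $|\Omega_{G_{\mathrm{ad}}}|$. Under the hypothesis $(|\Omega_{G_{\mathrm{ad}}}|, q(q-1)) = 1$, the $|\Omega_{G_{\mathrm{ad}}}|$-torsion of $Z(\mathcal{O}_F) \cong (\mathcal{O}_F^\times)^{\dim Z}$ is trivial --- because every root of unity in $F$ has order dividing some $p^k (q-1)$ --- forcing $c = 1$ and completing the verification.
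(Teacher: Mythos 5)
Your proof is correct, and the core arithmetic is the same as the paper's, but the packaging is cleaner, especially in part (2). For part (1), you replace the paper's five-lemma argument on the diagram restricted to $\overline{\mathrm{pr}}^{-1}(s_{\mathrm{ad}}(\Omega_{\mathrm{ad}}))$ with an equivalent injectivity observation: the kernel of $G(F)/Z(\mathcal{O}_F)\to G_{\mathrm{ad}}(F)\times_{\Omega_{G_{\mathrm{ad}}}}\Omega_G$ is $\bigl(Z(F)\cap G(F)^1\bigr)/Z(\mathcal{O}_F)=1$, and this plays the role the five lemma plays in the paper; one could also remark that your $s_G$ is independent of the auxiliary section $\sigma$ of $\Omega_G\twoheadrightarrow\Omega_{G_{\mathrm{ad}}}$, again by the same injectivity. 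For part (2), the paper starts from a set-theoretic section $s^0$, forms the associated $2$-cocycle in $H^2(\Omega_G,Z(F))$, invokes $H^2(\mathbb{Z},A)=0$ to split it, and then readjusts to restore the section property; your version simply lifts a generator $\omega_0$ to $g\in N_G(T)(F)$, normalizes by a central element so that $\kappa_G(g)=\omega_0$, and sets $s_G(k)=g^k$, short-circuiting the cohomological detour because $\mathbb{Z}$ is free on one generator. For part (3), the two arguments are essentially identical: the commutator $c=[g_i,g_j]$ (the paper's $a$) is central, lies in $Z(F)\cap\ker\kappa_G=Z(\mathcal{O}_F)$, and is killed by $N=|\Omega_{G_{\mathrm{ad}}}|$---in the paper by raising the conjugation relation to the $N$-th power, in your version by taking $n=d_i$ in $g_i^n g_j g_i^{-n}=g_j c^n$---after which the coprimality hypothesis forces $c=1$; the paper then says the extension is abelian hence split (since $\mathbb{Z}^n$ is projective), while you directly define $s_G$ on a $\mathbb{Z}$-basis, which is the same content. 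Net effect: your route avoids the $2$-cocycle formalism and is somewhat more explicit, at the small cost of treating the three cases by hand rather than by a uniform extension-theoretic picture.
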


\begin{proof}
We start with (1).  It follows from Theorem \ref{mainresult} that $\kappa_{G_{\mathrm{ad}}}$ has a homomorphic section $s_{G_{\mathrm{ad}}}$, since $G_{\mathrm{ad}}$ is known to be a product of almost-simple groups.  Moreover, if $\kappa_Z$ denotes the Kottwitz homomorphism for $Z(F)$, then $\kappa_Z$ also has a homomorphic section, which we denote $s_Z$.  As $H^1(F,Z) = 1$, we have a commutative diagram of exact sequences

\[ \begin{tikzcd}
1 \arrow{r} & Z(F) \arrow{r} \arrow[swap]{d}{\kappa_Z} &  \arrow[swap]{d}{\kappa_G} G(F) \arrow{r}{\mathrm{pr}} & G_{\mathrm{ad}}(F) \arrow[swap]{d}{\kappa_{G_{\mathrm{ad}}}} \arrow{r} & 1  \\%
1 \arrow{r} & \Omega_Z \arrow{r}& \Omega_G \arrow{r}{\mathrm{pr}}  & \Omega_{G_{\mathrm{ad}}} \arrow{r} & 1
\end{tikzcd}
\]

We naturally have $Z(F) / Z(\mathcal{O}_F) \cong X_*(Z)$, therefore obtaining another diagram

\[ \begin{tikzcd}
1 \arrow{r} & Z(F)/ Z(\mathcal{O}_F) \arrow{r} \arrow[swap]{d}{\overline{\kappa_Z}} &  \arrow[swap]{d}{\overline{\kappa_G}} G(F) / Z(\mathcal{O}_F) \arrow{r}{\overline{\mathrm{pr}}} & G_{\mathrm{ad}}(F) \arrow[swap]{d}{\kappa_{G_{\mathrm{ad}}}} \arrow{r} & 1  \\%
1 \arrow{r} & \Omega_Z \arrow{r}& \Omega_G \arrow{r}{\mathrm{pr}}  & \Omega_{G_{\mathrm{ad}}} \arrow{r} & 1
\end{tikzcd}
\]

where $\overline{\kappa_Z}, \overline{\kappa_G}$ are the induced maps.  Let $\overline{\kappa_{G_{\mathrm{ad}}}}$ denote the map induced from $\kappa_{G_{\mathrm{ad}}}$ on $s_{\mathrm{ad}}(\Omega_{\mathrm{ad}})$.  Then we have a commutative diagram of groups:

\[ \begin{tikzcd}
1 \arrow{r} & Z(F) / Z(\mathcal{O}_F) \arrow{r} \arrow[swap]{d}{\overline{\kappa_Z}} &  \arrow[swap]{d}{\overline{\kappa_G}} \overline{\mathrm{pr}}^{-1}(s_{\mathrm{ad}}(\Omega_{\mathrm{ad}})) \arrow{r}{\mathrm{pr}} & s_{\mathrm{ad}}(\Omega_{\mathrm{ad}})  \arrow[swap]{d}{\overline{\kappa_{G_{\mathrm{ad}}}}} \arrow{r} & 1  \\%
1 \arrow{r} & \Omega_Z \arrow{r}& \Omega_G \arrow{r}{\mathrm{pr}}  & \Omega_{G_{\mathrm{ad}}} \arrow{r} & 1
\end{tikzcd}
\]

We have that $\overline{\kappa_Z}, \overline{\kappa_{G_{\mathrm{ad}}}}$ are isomorphisms, so by the five lemma, $\overline{\kappa_G}$ is an isomorphism, and thus the map $\overline{\kappa_G} : G(F) / Z(\mathcal{O}_F) \rightarrow \Omega_G$ has a homomorphic section. 

We now prove (2).  Make an initial choice of a homomorphic section $s_Z^0$ of $\kappa_Z$.  Given $\sigma \in \Omega_G$, let $s^0(\sigma)$ be any lift in $G(F)$ of $s_{G_{\mathrm{ad}}}(\mathrm{pr}(\sigma)) \in N_{G_{\mathrm{ad}}}(T_{\mathrm{ad}})(F)$; it automatically lies in $N_G(T)(F)$.  It might happen that $s^0$ is not a section of $\kappa_G$.  However, for all $\sigma \in \Omega_G$, we have $\mathrm{pr}(\kappa_G(s^0(\sigma))) = \kappa_{G_{\mathrm{ad}}}(\mathrm{pr}(s^0(\sigma))) = \kappa_{G_{\mathrm{ad}}}(s_{G_{\mathrm{ad}}}(\mathrm{pr}(\sigma))) = \mathrm{pr}(\sigma)$.  Thus, the difference between $\sigma$ and $\kappa_G(s^0(\sigma))$ belongs to $\Omega_Z$.  Since $\kappa_Z$ is surjective, we may alter each $s^0(\sigma)$ by an element $z_{\sigma}^0 \in Z(F)$ in such a way that $\sigma \mapsto s^0(\sigma) z_{\sigma}^0$ is a section of $\kappa_G$.

So we may assume $s^0$ is a set-theoretic section of $\kappa_G$, taking values in $N_G(T)(F)$.  Because $s_{G_{\mathrm{ad}}}$ is homomorphic, the map $$(\sigma_1, \sigma_2) \mapsto s^0(\sigma_1) s^0(\sigma_2) s^0(\sigma_1 \sigma_2)^{-1}$$ is a $2$-cocycle of $\Omega_G$ with values in $Z(F)$, with $\Omega_G$ acting trivially on $Z(F)$.  Therefore, we get an element of $H^2(\Omega_G, Z(F))$.  This group parameterizes isomorphism classes of extensions of $\Omega_G$ by $Z(F)$ where the induced action of $\Omega_G$ on the normal subgroup $Z(F)$ is trivial (i.e. $Z(F)$ is central in the extension group).  We claim that the extension corresponding to the $2$-cocycle is the direct product $Z(F) \times \Omega_G$.  This follows because $\Omega_G = \mathbb{Z}$ and $H^2(\mathbb{Z}, A) = 1$ for any abelian group $A$ with trivial $\mathbb{Z}$-action.

The fact that the extension is trivial means that the $2$-cocycle defining it is a $2$-coboundary.  This means that we may alter our initial choice of set-theoretic section $s^0$ to give a homomorphism $s : \Omega_G \rightarrow G(F)$, taking values again in $N_G(T)(F)$.

The problem now is that $s$ might not be a section of $\kappa_G$, which we take care of as before.  By construction, $\sigma^{-1} \kappa_G(s(\sigma)) \in \Omega_Z$ for every $\sigma \in \Omega_G$.  So we may define $z_{\sigma} := s^0_Z(\sigma  (\kappa_G(s(\sigma)))^{-1}) \in Z(F)$, for $\sigma \in \Omega_G$.  Note that $\sigma \mapsto z_{\sigma}$ is a homomorphism $\Omega_G \rightarrow Z(F)$.  Now define $$s_G(\sigma) := z_{\sigma}  s(\sigma).$$  Then $s_G$ is the desired homomorphic section of $\kappa_G$ in case (2).

In case (3), the same argument works, as long as we can prove that the $2$-cocycle defined by $s^0$ is still a $2$-coboundary.   But when $n >1$ it is no longer true that $H^2(\mathbb{Z},A)$ always vanishes for abelian groups $A$ with trivial $\mathbb{Z}^n$-action.  Nevertheless, we will show that the extension corresponding to the given $2$-cocycle is still trivial.  Write $\dot{e}_i = s^0(e_i)$, where $e_i$ corresponds to a standard basis vector in $\Omega_G \cong \mathbb{Z}^n$.  Then the extension is the exact sequence 
$$1 \rightarrow Z(F) \rightarrow Z(F) \langle \dot{e}_1, \cdots, \dot{e}_n \rangle \xrightarrow{\kappa_G} \Omega_G \rightarrow 1.$$
Write $N := |\Omega_{G_{\mathrm{ad}}}|$.  As $\mathrm{pr}(\dot{e}_j) \in \mathrm{im}(s_{G_{\mathrm{ad}}}) \cong \Omega_{G_{\mathrm{ad}}}$, we have $\mathrm{pr}(\dot{e}_j)^N = 1$ and hence $\dot{e}_j^N \in Z(F)$.  Moreover, $\dot{e}_i \dot{e}_j \dot{e}_i^{-1} \dot{e}_j^{-1} \in Z(F)$.  We may write 
$$a \dot{e}_j = \dot{e}_i \dot{e}_j \dot{e}_i^{-1},$$ for some $a \in Z(F)$.  Raising to the $N$-th power, we get $$a^N \dot{e}_j^N = \dot{e}_j^N,$$ and hence $a^N = 1$.  Therefore, $a \in Z(\mathcal{O}_F)$.  Moreover, since $N$ is coprime to the pro-order of the profinite group $Z(\mathcal{O}_F)$, we conclude that $a = 1$, and therefore the elements $\dot{e}_i$ pairwise commute.  Therefore, the extension is an abelian group.  But then the extension is trivial, since $\Omega_G \cong \mathbb{Z}^n$.  

This concludes the proof of the proposition.  But we make one additional comment.  By construction, the map $s_G|_{\Omega_Z}$ has image in $Z(F)$ and so gives a homomorphic section $s_Z$ of $\kappa_Z$.  This section might be different from the initial choice $s^0_Z$.  But now we have a commutative diagram

\[ \begin{tikzcd}
1 \arrow{r} & Z(F) \arrow{r} \arrow[swap]{d}{\kappa_Z} &  \arrow[swap]{d}{\kappa_G} G(F)  \arrow{r} & G_{\mathrm{ad}}(F) \arrow[swap]{d}{\kappa_{G_{\mathrm{ad}}}} \arrow{r} & 1  \\%
1 \arrow{r} &  \arrow[swap, shift right=2]{u}{s_Z} \Omega_Z \arrow{r}& \arrow[swap, shift right=2]{u}{s_G}  \Omega_G \arrow{r}  &  \arrow[swap, shift right=2]{u}{s_{G_{\mathrm{ad}}}} \Omega_{\mathrm{ad}}  \arrow{r} & 1
\end{tikzcd}
\]

\end{proof}

\end{document}